 \documentclass[11pt,letterpaper]{amsart}

\usepackage{graphicx}
\makeatother

\theoremstyle{definition}

\def\fnum{equation} 
\newtheorem{Thm}[\fnum]{Theorem}
\newtheorem{Cor}[\fnum]{Corollary}

\newtheorem{Lem}[\fnum]{Lemma}

\numberwithin{equation}{section}

\newcommand{\osc}{{\text{osc}}}

\def\RR{{\mathbb R}}

\def\CC{{\mathbb C }}

\newcommand{\e}{{\text {e}}}

\newcommand{\eqr}[1]{(\ref{#1})}

\title{Liouville theorem for immersed minimal surfaces in any codimension}

\author{Tobias Holck Colding}%
\address{MIT, Dept. of Math.\\
77 Massachusetts Avenue, Cambridge, MA 02139-4307.}
\author{William P. Minicozzi II}%

\thanks{The  authors
were partially supported by NSF  DMS Grants   2405393 and 2304684.}

\email{colding@math.mit.edu and minicozz@math.mit.edu}

\begin{document}

\maketitle

\begin{abstract}    
For a proper immersed minimal disk in $\RR^N$ with quadratic area growth, we 
show that any harmonic function  whose negative part grows at a slow sub-linear rate   is constant.  This leads to a higher codimensional Bernstein theorem for minimal disks contained in a sub-linearly growing cone.  The catenoid, helicoid and Enneper's family of surfaces together show that this result is optimal.  We also show uniform H\"older regularity of harmonic functions.  
\end{abstract}


\section{Introduction}

Throughout $\Sigma^2\subset \RR^N$ is a properly immersed minimal surface that is topologically a disk.  
We will use $|S|$ and $|\sigma|$ to denote the area of a two-dimensional set $S$ and the length of a one-dimensional set $\sigma$, respectively.  

\vskip2mm
Our Liouville theorem is the following:

\begin{Thm}   \label{t:Liouville0}
Suppose that $\Sigma$ is a properly immersed minimal surface in $\RR^N$ that is topologically a disk with $|B_r\cap \Sigma|\leq C_a\,r^2$.
If $u$ is  a  harmonic function on $\Sigma$ with 
\begin{align}
	  -C\,(1+|x|^{\alpha}) \leq u(x)  {\text{ for some   $\alpha<-\frac{\log (1- \e^{-24\, C_a})}{\log 2}$ and any constant $C$}}\,  , \notag
\end{align}
then $u$ is constant.  
\end{Thm}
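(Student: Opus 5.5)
We will prove the theorem by showing that $u$ has a uniform oscillation decay on dyadic extrinsic scales. This is then played off against the sub-linear lower bound. Since $\Sigma$ is a disk, we work on the intrinsic structure but measure scales extrinsically.

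\textbf{Step 1: reduction to a positive harmonic function.} Replacing $u$ by $u + C$, we may assume $u \geq -C|x|^\alpha$ away from a compact set. For each large $r$, set $v_r = u + C(1+(4r)^\alpha)$. Then $v_r$ is a nonnegative harmonic function on the component of $\Sigma \cap B_{4r}$ containing a fixed base point.

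\textbf{Step 2: a Harnack-type oscillation decay.} The key step is to show that on each dyadic annulus, the oscillation of $u$ on $\Sigma\cap B_r$ is at most $\theta$ times its oscillation on $\Sigma\cap B_{2r}$, with $\theta = 1-\e^{-24C_a}$. I would prove this by a capacity, or extremal length, argument using the disk topology.

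\begin{itemize}
\item For a harmonic $u$ on the disk, the maximum principle implies that the sublevel and superlevel sets of $u$ intersected with $B_{2r}$ reach $\partial B_{2r}$. No component is trapped inside, since $\Sigma$ is simply connected and properly immersed.
\item Let $M$ and $m$ be the sup and inf of $u$ on $\Sigma\cap B_{2r}$. Consider $w = (u-m)/(M-m)$ on $\Sigma \cap B_{2r}$.
\item Integrate $|\nabla w|^2$ against a cutoff. Using the coarea formula and the length of level curves, which cross the annulus $B_{2r}\setminus B_r$, one bounds the Dirichlet energy from below by extremal length: each level curve in the annulus has length $\geq r$.
\item Bound it from above via the area bound $|B_{2r}\cap\Sigma|\le 4C_a r^2$.
\end{itemize}

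Comparing the two bounds yields the quantitative drop $\osc_{B_r} u \le (1-\e^{-c C_a})\,\osc_{B_{2r}} u$. The explicit constant $24$ comes from tracking this comparison, with a logarithmic cutoff across several dyadic scales, through the monotonicity formula.

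\textbf{Step 3: conclusion.} Apply the one-sided version of Step 2. Since $u$ is bounded below by $-C r^\alpha$ on $B_{2r}$, a Harnack inequality for the positive function $v_r$ gives $\sup_{B_{r}} v_r \le K\, v_r(x_0)$. Hence $\osc_{B_{2^k}} u \le K' 2^{k\alpha}$. Iterating Step 2 from scale $2^k$ down to scale $R$ gives
\[
\osc_{B_R} u \le \theta^{k-j}\, K' 2^{k\alpha}, \qquad 2^j \approx R .
\]
The hypothesis on $\alpha$ is exactly $\theta\,2^{\alpha}<1$. So letting $k\to\infty$ shows $\osc_{B_R}u=0$ for every $R$, and $u$ is constant.

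\textbf{Main obstacle.} The main obstacle is Step 2, together with its one-sided Harnack analogue. Extrinsic balls $\Sigma\cap B_r$ need not be connected or simply connected, so one must choose the right component. One must also use the disk topology to guarantee that level curves of $u$ actually cross the annulus. I would first try to handle this by restricting to the component containing the base point, and using that its preimage in the disk has no interior boundary loops after filling holes. The Dirichlet-energy and length estimates would then be carried out on that filled region.
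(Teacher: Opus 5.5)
Your overall architecture (a dyadic decay factor $\theta=1-\e^{-24C_a}$ played against $r^\alpha$ growth, with $\theta\,2^\alpha<1$) matches the paper's. However, Step 2, which you correctly identify as the heart of the matter, is not actually supplied, and the mechanism you describe for it cannot work. If you compare upper and lower bounds for the Dirichlet energy of $w=(u-m)/(M-m)$ itself, both scale the same way in the size of $w$. Caccioppoli with $|w-\frac12|\le\frac12$ gives $\int_{\Sigma_{3r/2}}|\nabla w|^2\le 4r^{-2}|\Sigma_{2r}|\le 16C_a$. Coarea, the length bound $\ge r/2$ on level curves, and Cauchy--Schwarz give $\frac{r^2}{4}(\osc_{\Sigma_r}w)^2\le|\Sigma_{3r/2}|\int_{\Sigma_{3r/2}}|\nabla w|^2$. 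Together these yield only $\osc_{\Sigma_r}w\le 12C_a$, which is vacuous since monotonicity forces $C_a\ge\pi$; the extremal-length formulation gives the same thing. The missing idea is to take the logarithm of the function, not of the cutoff. Normalize so that $u>0$ on $\Sigma_{2r}$ and $u(x_0)=1$, and set $v=-\log u$. Then $\Delta v=|\nabla v|^2$, and testing with $\phi^2$ gives $\int_{\Sigma_{3r/2}}|\nabla v|^2\le 16r^{-2}|\Sigma_{2r}|$ \emph{independently of how large $v$ is}. Meanwhile the coarea/level-curve argument gives $\int_{\Sigma_{3r/2}}|\nabla v|\ge\frac r2\sup_{\Sigma_r}v$. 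Now the two sides scale differently, and Cauchy--Schwarz yields $\sup_{\Sigma_r}v\le 24C_a$, i.e.\ $u\ge\e^{-24C_a}$ on $\Sigma_r$. That is where the exponential comes from. No monotonicity formula or multi-scale cutoff enters, so your account of the constant $24$ is read off from the statement rather than derived. Separately, for the claim that level curves reach $\partial B_{3r/2}$ you need the convex hull property. It keeps the disk bounded by a closed level curve inside the region where $u$ is harmonic, and that is what justifies your ``filling holes''.

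Step 3 adds a second unproved ingredient. Because you work with the two-sided oscillation, you must control $\sup u$ from a lower bound on $u$ only. So you invoke an upper Harnack inequality $\sup_{\Sigma_r}v_r\le K\,v_r(x_0)$ on extrinsic components of a minimal disk with no curvature control, and you do not justify it. It can be recovered from the lower bound above by recentering at points $y\in\Sigma_r$ (note $x_0\in\Sigma_{y,2r}$ and $\Sigma_{y,4r}\subset\Sigma_{5r}$), but only at the cost of a worse area constant. The paper avoids this altogether by using the one-sided quantity $\osc^0_r u=u(x_0)-\inf_{\Sigma_r}u$. The hypothesis bounds it by $C\,r^\alpha$ directly. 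Applied to $(u-\inf_{\Sigma_{2r}}u)/\osc^0_{2r}u$, the estimate $u\ge\e^{-24C_a}$ above is exactly $\osc^0_r u\le\gamma\,\osc^0_{2r}u$. Iterating gives $\gamma^{1-k}\osc^0_2u\le\osc^0_{2^k}u\le C\,2^{k\alpha}$, which contradicts $\gamma\,2^\alpha<1$ unless $u$ is constant.
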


Since the coordinate functions are harmonic on $\Sigma$, we must   have $\alpha < 1$ for a Liouville theorem. 
Enneper's surface, described below, shows that $\alpha < \frac{1}{3}$ when $C_a = 3\, \pi$.  As $k \to \infty$, the  Enneper surface
of order $k$
has $\alpha(k) \to 0$ and $C_a (k)\to \infty$, see subsection $2.1.3$ in \cite{K}.
The catenoid shows that, even in $\RR^3$, any $\alpha>0$ would not suffice to rule out nonconstant harmonic functions  without a topological assumption.  Similarly, the helicoid has cubic area growth and a logarithmically growing harmonic function, showing
 that the quadratic area bound is necessary.

\vskip1mm
Combining Theorem \ref{t:Liouville0}
with the volume growth estimates of \cite{CM9} gives a Bernstein theorem for minimal disks with slowly growing height in any codimension:

\begin{Cor}   \label{c:Liouville}
Given $N$, there exists $\alpha > 0$ so that if 
  $\Sigma$ is a properly immersed minimal disk in $\RR^N$ that is   contained in $\{x\in \RR^N\,|\,\sum_{i=3}^N|x_i|\leq C\,(|x_1|^{\alpha}+|x_2|^{\alpha}+1)\}$ for   some constant $C$, then $\Sigma$ is the $x_1$--$x_2$ plane.  
\end{Cor}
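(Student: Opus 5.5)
The plan is to reduce Corollary \ref{c:Liouville} to Theorem \ref{t:Liouville0} in three steps: establish quadratic area growth, apply the Liouville theorem to the coordinate functions $x_3,\dots,x_N$, and conclude that $\Sigma$ lies in (and equals) the $x_1$--$x_2$ plane.

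\textbf{Step 1: area growth.} We first show that $\Sigma$ has quadratic area growth $|B_r\cap\Sigma|\le C_a r^2$, with $C_a$ depending only on $N$ once $\alpha$ is small. Inside $B_r$ the surface lies in a slab-like region. Its thickness in the last $N-2$ directions is at most $C(2r^\alpha+1)$, which is $o(r)$. This is exactly the setting of the volume growth estimates of \cite{CM9} for minimal disks in a sub-linearly growing cone. We would invoke those estimates to get $|B_r\cap\Sigma|\le C_a r^2$ for $r$ large, with $C_a=C_a(N)$ independent of $\alpha$ as long as $\alpha\le 1/2$, say. The bound extends to all $r$ after enlarging the constant. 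The key point is that the constant in \cite{CM9} depends on the dimension and the cone, not on the particular disk.

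\textbf{Step 2: Liouville for the coordinates.} With $C_a=C_a(N)$ fixed, we choose $\alpha>0$ smaller than both $1/2$ and the threshold $-\log(1-\e^{-24C_a})/\log 2$ from Theorem \ref{t:Liouville0}. This threshold is positive, so such a choice is possible. Each coordinate $x_i$ with $i\ge 3$ is harmonic on $\Sigma$ and satisfies
\[
-x_i\le |x_i|\le C(|x_1|^\alpha+|x_2|^\alpha+1)\le 3C(1+|x|^\alpha).
\]
Hence both $x_i$ and $-x_i$ are bounded below by $-3C(1+|x|^\alpha)$. Theorem \ref{t:Liouville0} then gives that $x_i$ is constant on $\Sigma$ for every $i\ge 3$.

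\textbf{Step 3: identifying the plane.} By Step 2, $\Sigma$ lies in an affine $2$-plane parallel to the $x_1$--$x_2$ plane. Since $\Sigma$ is properly immersed and has quadratic area growth, a proper immersed disk in a $2$-plane is a covering of that plane. The plane is simply connected, and the area growth forces the covering to be one-sheeted, so $\Sigma$ is the whole plane. Finally, the cone condition as $|x_1|,|x_2|\to\infty$ along this plane does not by itself force the constants $x_i$ to vanish. We would either read the corollary's conclusion as "an affine translate of the $x_1$--$x_2$ plane" or note that the intended normalization takes the constants to be $0$.

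The main obstacle is Step 1. We must make sure that the area constant from \cite{CM9} is uniform, meaning it does not degenerate as $\alpha$ changes and does not depend on $C$. Otherwise the choice of $\alpha$ in Step 2 becomes circular. If \cite{CM9} only gives a constant depending on the cone exponent, we would fix a reference exponent $\alpha_0$ first, obtain $C_a(N,\alpha_0)$, and then take $\alpha\le\alpha_0$ below the Liouville threshold. This works because the containment for $\alpha$ implies the containment for $\alpha_0$, after adjusting $C$.
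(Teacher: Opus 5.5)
Your route is the paper's route: quadratic area growth from \cite{CM9}, then Theorem \ref{t:Liouville0} applied to $x_3,\dots,x_N$. Your Step 3 remark is also correct. The argument only gives an affine plane parallel to the $x_1$--$x_2$ plane; for example, $\{x_3=1\}\subset\RR^3$ lies in the cone with $C=1$. (The simple connectivity of the plane alone makes the proper covering one-sheeted; area growth is not needed there.)

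\textbf{The gap is Step 1, and it cannot be repaired.} There is no area constant $C_a(N)$, nor $C_a(N,\alpha_0)$, valid for all properly immersed minimal disks in a fixed sub-linear cone. Identify $\RR^4=\CC^2$ via $z=x_1+i\,x_2$, $w=x_3+i\,x_4$, and let $\Sigma_m=\{z=w^m\}$ with $m\geq 2$.
\begin{itemize}
\item $\Sigma_m$ is a complex curve, hence minimal.
\item It is the image of the proper embedding $w\mapsto(w^m,w)$ of $\CC$, hence a properly embedded disk, and it is not a plane.
\item Since $|x_3|+|x_4|\leq\sqrt{2}\,|w|=\sqrt{2}\,|z|^{1/m}$ and $|z|^{1/m}\leq 1+(|x_1|+|x_2|)^{\alpha}\leq 1+|x_1|^{\alpha}+|x_2|^{\alpha}$ for $1/m\leq\alpha\leq 1$, every $\Sigma_m$ with $m\geq 1/\alpha$ lies in the $\alpha$-cone with $C=\sqrt{2}$.
\item Its area is $|B_r\cap\Sigma_m|=\pi\rho^2+m\,\pi\,\rho^{2m}$ with $\rho^2+\rho^{2m}=r^2$, so its density at infinity is $m$.
\end{itemize}
Thus the area constants are unbounded inside every fixed cone. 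Meanwhile the admissible exponent $-\log(1-\e^{-24C_a})/\log 2$ in Theorem \ref{t:Liouville0} tends to $0$ as $C_a\to\infty$. The circularity you worried about is real, and your fallback via a reference exponent $\alpha_0$ fails for the same reason.

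\textbf{Consequence for the statement.} Using $\Sigma_m\times\{0\}\subset\RR^4\times\RR^{N-4}$, the corollary as literally stated, with $\alpha$ depending only on $N$, is false for every $N\geq 4$. For any $\alpha\in(0,1]$, $\Sigma_m$ with $m\geq 1/\alpha$ is a non-flat counterexample, and larger $\alpha$ only enlarges the cone. The paper's one-line proof has the same gap, unstated. It uses \cite{CM9} only to obtain quadratic area growth, with a constant depending on $\Sigma$, and Theorem \ref{t:Liouville0} then yields an exponent depending on that constant.

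\textbf{What the argument actually proves.} Your argument, like the paper's, establishes the statement with $\alpha$ allowed to depend on an area bound. Given $C_a$, there is $\alpha>0$ such that a properly immersed minimal disk in the $\alpha$-cone with $|B_r\cap\Sigma|\leq C_a\,r^2$ is an affine plane parallel to the $x_1$--$x_2$ plane. Combined with \cite{CM9}, this gives a $\Sigma$-independent statement: a proper minimal disk lying in the $\alpha$-cone for \emph{every} $\alpha>0$ (e.g.\ with logarithmically growing height) is such a plane.
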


Broadly speaking, there are two types of Liouville theorems.  Both assert that certain harmonic functions must be constant.  The first type is for bounded or even positive harmonic functions; see, e.g. \cite{Ya}.  The other is more general and allows for a  slowly growing negative lower bound; see, e.g. \cite{CgYa,CM8}.
It is a  classical fact that on any manifold with quadratic volume growth any positive harmonic function is constant.  

 The Hoffman-Meeks half-space theorem can also be thought of as a Liouville theorem albeit for the special harmonic functions that are the coordinate functions.  It states  that if  $\Sigma$ is properly immersed in a half-space in $\RR^3$, then it must be a plane, \cite{HM} (cf. also \cite{CM10}).
This is a   Liouville theorem for the positive harmonic function given by the  distance to the boundary of the half-space.  Without the assumption of proper the Liouville theorem fails in general.  Indeed Jorge-Xavier constructed non-flat minimal surfaces in a slab between two planes,  \cite{JX}.
Nadirashvili showed that  a minimal surface can even be contained in a ball, \cite{N}.
Without any topological assumption, the example of a catenoid in $\RR^3$, where the vertical coordinate function grows logarithmically,
 shows that  the conclusion of Theorem \ref{t:Liouville0} can fail.   By considering the universal cover of the catenoid,
the Liouville theorem does not hold if   the immersion is not proper.

 The classical Enneper surface{\footnote{Enneper's surface is parametrized conformally  (see, e.g., page $13$ in \cite{CM1}) by $$(u,v) \to \left(u - \frac{u^3}{3} + u \, v^2, - v + \frac{v^3}{3} - v\, u^2 , u^2 - v^2\right)\, . $$
 The functions $u$ and $v$ on the plane correspond to harmonic functions on Enneper's surface that grow
 polynomial at the rate $\frac{1}{3}$.     }} is a properly immersed minimal disk.  It has quadratic area growth with
  $C_a = 3\, \pi$, the tangent cone at infinity is a plane with multiplicity three, and the vertical coordinate function is a nonconstant harmonic function that grows polynomially at the rate $\frac{2}{3}$.   In particular, for any $\alpha > \frac{2}{3}$, there is a constant $C$ so that Enneper satisfies
  \begin{align}
  	|x_3| \leq C \, (|x_1|^{\alpha} + |x_2|^{\alpha} +1) \, .
  \end{align}
    There are   polynomially growing harmonic functions on Enneper's surface growing at the rate of $\frac{1}{3}$, but these are not seen geometrically.
    
\begin{figure}[ht]
    \centering
    \includegraphics[width=0.9\textwidth]{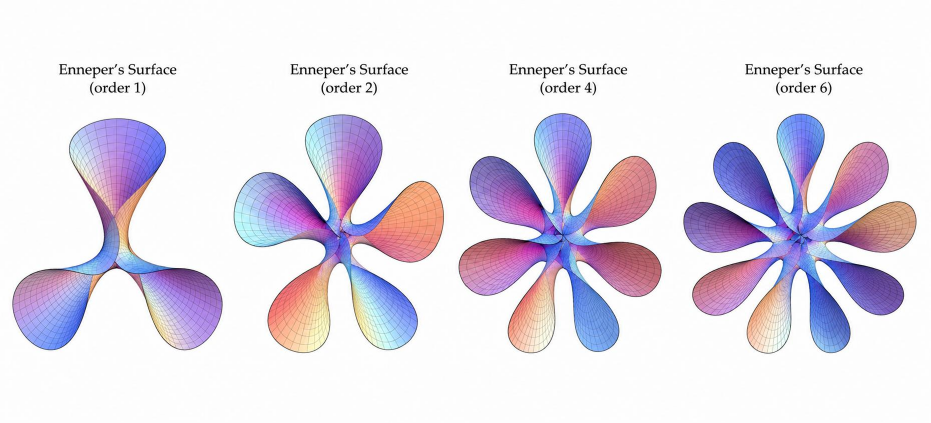}
    \caption{Enneper surfaces of orders 1, 2, 4, and 6}
    \label{fig:enneper}
\end{figure}

Without quadratic area growth,  there is no such Liouville theorem no matter how slow the sub-linear growth is.  The helicoid{\footnote{The
   helicoid is   parametrized conformally by (see, e.g., page $8$ in \cite{CM1})     $$
   	(u,v) \to (\cosh u \, \cos v , \cosh u \, \sin v , v ) \, .
   $$
   There is a logarithmically growing harmonic function  on the helicoid that corresponds to the coordinate function $u$ on the plane.}} has cubic area growth and has a nonconstant harmonic function that grows logarithmically, and thus more slowly than any polynomial rate. The helicoid is conformal to the plane, so any bounded (or positive) harmonic function is constant.

   We will also prove uniform H\"older regularity for harmonic functions  which can be viewed as an effective version of the Liouville theorem:

\begin{Thm}  \label{t:Liouville1.0}
Suppose that $\Sigma\subset B_{2\,r}\subset \RR^N$ is a compact immersed minimal surface with $\partial \Sigma\subset \partial B_{2\,r}$ that is topologically a disk with $|\Sigma|\leq 4\,C_a\,r^2$.
There exists $\alpha=\alpha (C_a)>0$ such that if $u$ is a harmonic function on $\Sigma$, $x$, $y$ lie in the same connected component of $B_s\cap \Sigma$, where $0<s<r$, then
\begin{align}
|u(x)-u(y)|\leq C\,\|u\|_{L^1(\Sigma)}\,\left(\frac{s}{r}\right)^{\alpha}\, . \notag
\end{align}
\end{Thm}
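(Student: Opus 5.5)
The plan is to prove an oscillation decay estimate on shrinking scales and then iterate it. Two ingredients are needed: an $L^\infty$ bound in the interior in terms of $\|u\|_{L^1(\Sigma)}$, and a fixed-fraction reduction of oscillation from one scale to a comparable smaller one. The exponent $\alpha$ will come from the reduction factor; the constant $\log(1-\e^{-24C_a})/\log 2$ in Theorem \ref{t:Liouville0} suggests exactly this mechanism, with ratio $2$ between scales and oscillation reduced by the factor $1-\e^{-24 C_a}$ each time.

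\emph{Step 1: sup bound.} Let $\Sigma_t$ be the component of $B_t\cap\Sigma$ containing $x$. The functions $u^\pm$ are subharmonic on a minimal surface. The extrinsic mean value inequality for subharmonic functions on minimal submanifolds (monotonicity) gives, for $z\in\Sigma\cap B_r$,
\[
|u(z)|\le \frac{1}{\pi (r/2)^2}\int_{B_{r/2}(z)\cap\Sigma}|u| .
\]
Hence $\sup_{B_r\cap\Sigma}|u|\le C\,r^{-2}\|u\|_{L^1}$. Here $C$ absorbs the normalization, and the statement's constant is presumably allowed to depend on $r$; otherwise one rescales to $r=1$. Thus it suffices to show
\[
\operatorname{osc}_{\Sigma_s} u \le C\,(s/r)^{\alpha}\operatorname{osc}_{\Sigma_{r}}u .
\]

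\emph{Step 2: one-step decay.} Fix $t\le r$ and write $M=\sup_{\Sigma_t}u$, $m=\inf_{\Sigma_t}u$. The goal is to show that $\operatorname{osc}_{\Sigma_{t/2}}u\le(1-\e^{-24C_a})\operatorname{osc}_{\Sigma_t}u$. Replacing $u$ by $M-u$ or $u-m$, we may assume $v\ge 0$ on $\Sigma_t$, with $v\ge \frac12\operatorname{osc}$ on at least half the area of $\Sigma_{t/2}$. Since $\Sigma$ is a disk, each component $\Sigma_t$ is simply connected by the maximum principle: the distance squared is subharmonic, so no component of $\Sigma\setminus\Sigma_t$ is compact. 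We then need a Harnack inequality on $\Sigma_{t/2}$ with constant $\e^{24C_a}$.

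For this I would use the conformal structure. Pass to the intrinsic geometry, or use the Fischer-Colbrie/Gauss–Bonnet-free approach via the coarea formula and the length of level sets of $|x|$. The area bound $|\Sigma_t|\le 4C_a t^2$ and the coarea formula give radii $\rho\in(t/2,t)$ where $|\partial\Sigma_\rho|\le C\,C_a\rho$. The harmonic function $\log v$ then has controlled gradient integral: integrating $|\nabla\log v|^2\eta^2$ with a logarithmic cutoff in $|x|$ gives $\int |\nabla \log v|^2\eta^2\le 4\int|\nabla\eta|^2$. With quadratic area growth this is bounded by a multiple of $C_a$ on dyadic annuli.

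Simple connectivity then enters through a length–area argument. Every point of $\Sigma_{t/2}$ can be connected to $\partial \Sigma_\rho$ by curves, and the variation of $\log v$ along the circles $\partial\Sigma_\rho$ is controlled by $\int_{\partial\Sigma_\rho}|\nabla\log v|$. Cauchy–Schwarz gives at most $\sqrt{|\partial\Sigma_\rho|\int_{\partial \Sigma_\rho}|\nabla \log v|^2}$, and averaging over $\rho$ bounds this by $C\,C_a$. The maximum principle on the disk bounded by $\partial\Sigma_\rho$ transfers the bound to the interior. This yields $\sup_{\Sigma_{t/2}}v\le \e^{24C_a}\inf_{\Sigma_{t/2}} v$, which gives the decay factor.

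\emph{Step 3: iteration.} Applying Step 2 at $t=r,r/2,\dots$ gives
\[
\operatorname{osc}_{\Sigma_s}u\le (1-\e^{-24C_a})^{k}\operatorname{osc}_{\Sigma_r}u
\]
with $k\approx\log_2(r/s)$, that is, $\alpha=-\log(1-\e^{-24C_a})/\log 2$. Since $x,y\in\Sigma_s$, the result follows.

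The main obstacle is Step 2. One issue is that the nested components $\Sigma_t$ are intrinsic objects, and $\partial\Sigma_\rho$ may have several components. The other is showing that the logarithmic energy bound really controls $\operatorname{osc}\log v$ on a \emph{connected} component. Here simple connectivity is essential: the catenoid shows that without it the level curves can wrap around a neck. I would first try to handle multiple boundary components by using that every component of $\partial\Sigma_\rho$ bounds a disk in $\Sigma$ reaching outside, and then apply the maximum principle to $\log v$ on that disk.
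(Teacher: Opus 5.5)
Your proposal is correct, and it has the same skeleton as the paper: a mean value bound for $|u|$, a one-step oscillation decay from scale $t$ to $t/2$, and iteration. The key one-step estimate, however, is proved by a different slicing.

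The paper works with $v=-\log u$, normalized so that $v(x_0)=0$. It bounds $\int_{\Sigma_{3r/2}}|\nabla v|^2$ by a cutoff argument and then applies the coarea formula to the level sets of $v$ itself. By the disk topology and the maximum principle, no level curve of $u$ can close up. So every level $s\in(0,M)$ contributes a curve of length at least $r/2$ running from $\Sigma_r$ to $\partial\Sigma_{3r/2}$. This gives $\frac{r}{2}M\le\int|\nabla v|$, and Cauchy--Schwarz yields $M\le 24\,C_a$.

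You instead slice by level sets of the extrinsic distance, a Courant--Lebesgue argument. Averaging over $\rho\in(t/2,3t/4)$ gives a regular $\rho$ with $|\partial\Sigma_\rho|\int_{\partial\Sigma_\rho}|\nabla\log v|^2\le C\,C_a^2$. This bounds the oscillation of $\log v$ along $\partial\Sigma_\rho$, and the maximum principle on $\overline{\Sigma}_\rho$ carries the bound inside.

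Topology enters differently in the two arguments. The paper uses it to exclude closed level curves of $u$. You use it, via subharmonicity of $|x|^2$, to show that $\Sigma_\rho$ is simply connected.

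What each route buys:
\begin{itemize}
\item Yours gives a genuine two-sided Harnack inequality $\sup_{\Sigma_{t/2}}v\le \e^{C\,C_a}\inf_{\Sigma_{t/2}}v$. This also yields the one-sided decay used in Theorem~\ref{t:Liouville0}.
\item The paper's is shorter and needs no choice of a good radius. It gives the explicit constant $24\,C_a$, but only the one-sided bound $\inf_{\Sigma_r}u\ge \e^{-24C_a}u(x_0)$, which is exactly what is needed there.
\end{itemize}

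Some points to tidy:
\begin{itemize}
\item The ``main obstacle'' you flag is already settled by your own observation. For regular $\rho$, $\overline{\Sigma}_\rho$ is a compact simply connected surface, hence a disk with exactly one boundary circle. The disk that circle bounds is $\overline{\Sigma}_\rho$ itself, not one ``reaching outside''; outside $\Sigma_t$ you have no positivity of $v$.
\item With a full Harnack inequality the ``half the area'' selection is unnecessary. One point of $\Sigma_{t/2}$ where $M-u$ or $u-m$ is at least $\frac12\osc$ suffices, giving a decay factor of $1-\frac{1}{2H}$, or $\frac{H-1}{H+1}$ by using both.
\item Your argument produces some constant $C\,C_a$, not $24\,C_a$. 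This is harmless, since only $\alpha(C_a)>0$ is claimed.
\item $\log v$ is not harmonic. The energy bound comes from $\Delta\log v=-|\nabla\log v|^2$, and a linear cutoff suffices.
\item The bound $|\Sigma_{x,t}|\le 4C_a t^2$ for all $t\le r$ should be derived from $|\Sigma|\le 4C_a r^2$ by monotonicity, using $B_r(x)\subset B_{2r}$. The paper uses this implicitly as well.
\end{itemize}
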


The examples above show that the smallest possible lower bound that would give a Liouville theorem is in terms of the distance to some power $<1$ that depends on the area growth.  This corresponds to that the exponent in the H\"older regularity is $<1$ and depends on the area.  If  the minimal surface is embedded in $\RR^3$ and the harmonic function is a coordinate function, then more is known.  Indeed the one-sided curvature estimate in \cite{CM2,CM3,CM4,CM5} can be interpreted as showing a gradient estimate for the log of a coordinate function on embedded minimal disks in $\RR^3$ without  priori  bounds on area or curvature.    See \cite{BB,CM6,CM7,MP,MPR,MT,P} for applications and more results in this direction.

 The class of minimal surfaces with quadratic area growth is very rich and includes both classical minimal surfaces of finite total curvature in $\RR^3$, \cite{CM1,MP,Os},
 as well as complex algebraic curves  in dimensions four and above, see, e.g. \cite{J}.

 \vskip2mm
A point of interest is that the Gauss-Bonnet theorem is never used in the proofs.  Instead almost all techniques are applicable in higher dimensions as well.  
The catenoid shows that some topological assumption is needed in Theorem \ref{t:Liouville1.0}.

\section{Bound for the oscillation}

Fix $x_0\in \Sigma$. For $r>0$, we let $\Sigma_r$ be the connected component of $B_r (x_0)\cap \Sigma$ containing $x_0$.  Observe  that if $r<s$, then $\Sigma_r\subset \Sigma_s$.   
If $u$ is a bounded function on $\Sigma_r$, then the oscillation $\osc_r^0 u$ of $u$ on $\Sigma_r$ is defined to be the amount that $u$ goes below $u(x_0)$ on $\Sigma_r$ 
\begin{align}
\osc_r^0 u= u(x_0) -\inf_{\Sigma_r}u\,  .
\end{align}
We assume that 
\begin{align}
|\Sigma_r|\leq C_a\,r^2\,  .
\end{align}

The next growth estimate for the oscillation is the key to both the Liouville theorem:

\begin{Lem}  \label{l:osc}
If $u$ is a harmonic function on $\Sigma_{2\,r}$, then
\begin{align}
\osc_r^0 u\leq \gamma\,\osc_{2\,r}^0 u\,  ,
\end{align}
where $\gamma= 1- \e^{-24\, C_a}<1$.  
\end{Lem}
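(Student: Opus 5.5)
The plan is to reduce the lemma to a Harnack inequality for positive harmonic functions and prove that inequality with an energy/coarea argument on level sets, using the disk topology.

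\textbf{Reduction.} Set $v=u-\inf_{\Sigma_{2r}}u\geq 0$, which is harmonic on $\Sigma_{2r}$; if $u$ is unbounded below there is nothing to prove. Then $\osc^0_{2r}u=v(x_0)$ and $\osc^0_r u=v(x_0)-\inf_{\Sigma_r}v$. So the lemma is equivalent to the Harnack-type bound
\begin{align}
\inf_{\Sigma_r} v\geq \e^{-24\,C_a}\, v(x_0)\, . \notag
\end{align}
Replacing $v$ by $v+\epsilon$ and letting $\epsilon\to 0$, we may assume $v>0$. Put $w=\log v$. It suffices to show that $|w(x_0)-w(y)|\leq 24\,C_a$ for every $y\in\Sigma_r$.

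\textbf{Energy bound.} Since $\Delta v=0$, we have $\Delta w=-|\nabla w|^2$. Multiply by $\phi^2$ and integrate by parts; absorbing the cross term gives
\begin{align}
\int\phi^2|\nabla w|^2\leq 4\int|\nabla\phi|^2\, . \notag
\end{align}
Take $\phi$ to be a function of $|x-x_0|$ that equals $1$ on $B_{\rho}$, vanishes outside $B_{2r}$, and is linear in between. Here $\rho\in(r,2r)$ is an intermediate radius, say $3r/2$. Since $|\nabla_\Sigma|x||\leq 1$ and $|\Sigma_{2r}|\leq 4C_a r^2$, this gives a bound $\int_{\Sigma_\rho}|\nabla w|^2\leq c_1\,C_a$ with $c_1$ an explicit numerical constant.

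\textbf{Topology and level sets.} First check that $\Sigma_{2r}$ and $\Sigma_\rho$ are disks. Since $|x-x_0|^2$ is subharmonic on $\Sigma$, the maximum principle shows that no component of $B_s(x_0)\cap\Sigma$ can contain a closed curve bounding a region of $\Sigma$ that leaves $B_s$. Hence every component is simply connected. Now fix $y\in\Sigma_r$ with $w(y)<w(x_0)$ and a level $t\in(w(y),w(x_0))$. Because $v$ is harmonic on a disk, it has no interior local extrema, so a level curve $\{w=t\}$ cannot close up in the interior. The component of $\{w=t\}$ separating $y$ from $x_0$ must therefore reach $\partial\Sigma_{2r}$. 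In particular it contains an arc in $\Sigma_\rho$ joining a point of $\Sigma_r$ to $\partial B_\rho$. That arc has length $L(t)\geq \rho-r=r/2$.

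\textbf{Coarea step.} This is where the constant is produced. By the coarea formula and Cauchy--Schwarz,
\begin{align}
\frac r2\,\big(w(x_0)-w(y)\big)\leq \int_{w(y)}^{w(x_0)} L(t)\,dt\leq \int_{\Sigma_\rho}|\nabla w|\leq |\Sigma_\rho|^{1/2}\Big(\int_{\Sigma_\rho}|\nabla w|^2\Big)^{1/2}\, . \notag
\end{align}
The right-hand side is at most $(C_a\rho^2)^{1/2}(c_1C_a)^{1/2}$, which is $\lesssim C_a\, r$. Hence $w(x_0)-w(y)\leq c\,C_a$. I expect the main work to be in two places. The first is justifying the level-set separation claim rigorously: one must handle critical points, where Sard gives regular levels for almost every $t$, and immersion self-intersections, which is why everything is done intrinsically on the abstract disk. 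The second is choosing $\rho$ and the cutoff so that the numerical constant comes out as $24$.
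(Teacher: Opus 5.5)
Your proposal is correct and is essentially the paper's proof. The paper normalizes $u$ so that $u(x_0)=1$ and $\inf_{\Sigma_{2r}}u=0$, works with $v=-\log u$ (your $-w$), and gets $\int_{\Sigma_{3r/2}}|\nabla v|^2\le 16\,r^{-2}|\Sigma_{2r}|$ from the same cutoff argument; it then uses disk topology, the convex hull property and the maximum principle to force level curves to reach $\partial\Sigma_{3r/2}$, and finishes with coarea plus Cauchy--Schwarz. With your choice $\rho=3r/2$ the constant needs no further tuning: $c_1=64$, so $\frac{r}{2}\,(w(x_0)-w(y))\le \frac{3r}{2}\cdot 8\,C_a$, which gives exactly $24\,C_a$.
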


\begin{proof}
We can assume that $u$ is nonconstant as else there is nothing to show.  It follows from the strong maximum principle that $\osc^0_r u>0$ for $r>0$.  After replacing $u$ by 
\begin{align}
 \frac{u-\inf_{\Sigma_{2\,r}}u}{ \osc^0_{2\,r}u}\,  ,
\end{align}
we may assume that $ u>0$ on $\Sigma_{2\,r}$, $u(x_0) = 1 $, and $\osc_{2r}^0 u = 1$.

Since $u>0$ is harmonic, then $v=-\log u$ satisfies 
\begin{align}
	\Delta\,v=|\nabla v|^2 \, .
\end{align}
     Let $\phi$ be a cutoff function that is $1$ on $B_{\frac{3}{2}\,r}$, $|\nabla \phi|\leq \frac{2}{r}$ and has support in $B_{2\,r}$.  Using that $\Delta\,v=|\nabla v|^2$, integration by parts together with the Cauchy-Schwarz inequality gives  
\begin{align}
\int_{\Sigma_{2\,r}} \phi^2\,|\nabla v|^2=\int_{\Sigma_{2\,r}} \phi^2\,\Delta\,v=-2\int_{\Sigma_{2\,r}} \phi\,\langle\nabla \phi,\nabla v\rangle\leq \frac{1}{2}\int_{\Sigma_{2\,r}} |\nabla v|^2\,\phi^2+2\int_{\Sigma_{2\,r}} |\nabla \phi|^2\,  .
\end{align}
It follows that 
\begin{align}  \label{e:Liouv1}
\int_{\Sigma_{\frac{3}{2}\,r}}|\nabla v|^2\leq \int_{\Sigma_{2\,r}} \phi^2\,|\nabla v|^2\leq 4\int_{\Sigma_{2\,r}} |\nabla \phi|^2\leq 16\,r^{-2}\,|\Sigma_{2\,r}|\,  .
\end{align}

We will next bound $M=\sup_{\Sigma_r}v$ in terms of $\int_{\Sigma_{\frac{3\,r}{2}}}|\nabla v|$.  
  Since  $v(x_0) = 0$ and $\Sigma_r$ is connected, we have that
\begin{enumerate}
\item[($\dagger$)] $v$ takes every value in $[0 , M)$ on $\Sigma_r$.
\end{enumerate}
Suppose that $s \in (0,M)$ is a regular value of $v$ on $\Sigma$ and set $\sigma_s = \Sigma_{\frac{3r}{2}} \cap \{ v = s \}$.  We claim that any component 
$\tilde{\sigma}_s$ of $\sigma_s$ must  go to the boundary $\partial \Sigma_{ \frac{3r}{2}}$.  If this was not the case, then we would have a simple closed curve in the topological disk $\Sigma$  where $u$ was constant.  This would be forced to bound a disk $\tilde{\Sigma}$  in $\Sigma$ which, by the convex hull property (see, for instance, proposition 1.9 in \cite{CM1}), must be contained in 
$\Sigma_{ \frac{3r}{2}}$.  The maximum principle would then  imply that $u$ is constant on $\tilde{\Sigma}$, which forces $u$ to be constant everywhere by unique continuation.  Since $u$ is assumed to be nonconstant, this gives the contradiction.

Property ($\dagger$) implies that at least one component $\tilde{\sigma}_s$ intersects $\Sigma_r$.  Since $\tilde{\sigma}_s$ also goes to  $\partial B_{ \frac{3r}{2}}$, we conclude that
\begin{align}
	 |\sigma |\geq \frac{r}{2} \, .
\end{align}
Since almost every value is regular by Sard's theorem (Theorem $3.4.3$ in \cite{Fe}), 
   the coarea formula  (page $243$ in \cite{Fe})   gives that
\begin{align}
\int_{\Sigma_{\frac{3\,r}{2}}}|\nabla v|&=\int_0^{\infty} \left| \{v=s\}\cap \Sigma_{\frac{3\,r}{2}} \right| \, ds
	\geq \int_0^{M}\left| \{v=s\}\cap \Sigma_{\frac{3\,r}{2}} \right| \, ds \geq \frac{r}{2}\,M\,  .
\end{align}
Combining this with  the Cauchy-Schwarz inequality gives that 
\begin{align}
\frac{r^2}{4}\,M^2 \leq \left( \int_{\Sigma_{\frac{3\,r}{2}}}|\nabla v| \right)^2 \leq \left| \Sigma_{\frac{3\,r}{2}} \right| \, \int_{\Sigma_{\frac{3\,r}{2}}}|\nabla v|^2
\leq  C_a \, \frac{ 9\, r^2}{4} \,\int_{\Sigma_{\frac{3\,r}{2}}}|\nabla v|^2 \, .
\end{align}
Using \eqr{e:Liouv1} to bound the energy term now gives
\begin{align}
	M^2 \leq 9\, C_a\, \int_{\Sigma_{\frac{3\,r}{2}}}|\nabla v|^2 \leq 9\, C_a \, \frac{ 16 \, |\Sigma_{2r}|}{r^2} \leq (24  \, C_a)^2 \, , 
\end{align}
so that $M \leq 24 \, C_a$.
This gives that on $\Sigma_r$
\begin{align}
v\leq M \leq 24 \, C_a\, .  
\end{align}
It follows  that 
$\e^{-24\, C_a}\leq \e^{-M}\leq u$ on $\Sigma_r$ and therefore 
\begin{align}
	\osc_r^0 u\leq 1-\e^{-24\, C_a}=\gamma\,\osc_{2\,r}^0 u \, .
\end{align}
\end{proof}

\subsection{Liouville theorem}
We are now ready to prove the Liouville theorem.

\begin{proof}
[Proof of Theorem \ref{t:Liouville0}] 
If $u$ is not constant, then   the strong maximum principle and unique continuation imply that $\osc_{1}^0u >0$.  For each positive integer $k$, set 
\begin{align}
a_k=\osc_{2^k}^0 u\,  .
\end{align}
By Lemma \ref{l:osc}, we get for every $k$ that
\begin{align}
a_{k+1}\geq \gamma^{-1}\,a_k\, ,
\end{align}
and therefore
\begin{align}
a_{k}\geq \gamma^{1-k}\,a_1\,  .
\end{align}
By assumption, $u(x) \geq -C\,(1+|x|^{\alpha})$ for some $C$ and some $\alpha > 0$.  After increasing $C$, we get for all $r \geq 1$ that 
  $ \osc_r^0 u  \leq  C\,r^{\alpha}$ for the same $\alpha$, and thus 
\begin{align}
a_k\leq  C\,2^{k\,\alpha}\,  .
\end{align}
It follows that $\gamma^{1-k}\,a_1\leq  C\,2^{k\,\alpha}$ or, equivalently, 
\begin{align}
\frac{a_1\, \gamma}{ C}\leq (\gamma\,2^{\alpha})^k\,  .
\end{align}
However,  $\gamma\,2^{\alpha}<1$ by assumption, so the right-hand side goes to zero as $k \to \infty$. This gives the desired contradiction.  
\end{proof}

\begin{proof}
[Proof of Corollary \ref{c:Liouville}]
By \cite{CM9},
 $\Sigma$ has quadratic area growth.  We can therefore apply Theorem \ref{t:Liouville0} to each  coordinate functions $x_i$ for $i=3,\cdots,N$ to conclude that each of these is constant.  The corollary  follows.  
\end{proof}

In another direction, Schoen and Simon, \cite{SS}, proved a Bernstein theorem for proper embedded minimal disks in $\RR^3$ with quadratic area growth;
  cf. corollaries $1.7$ and $1.18$ in \cite{CM3} and the stronger results in
  \cite{CM5} without any a priori bounds.

\subsection{H\"older regularity of harmonic functions}

To prove Theorem \ref{t:Liouville1.0}, suppose that $x\in \Sigma$ and $r>0$.  Let $\Sigma_{x,r}$ be the connected component of $B_r(x)\cap\Sigma$ containing $x$.   Set 
\begin{align}
\osc_r^xu=\sup_{\Sigma_{x,r}}u-\inf_{\Sigma_{x,r}}u\,  .
\end{align}

The key is the following:

\begin{Lem}  \label{l:oscv2}
There exists $\gamma<1$, such that if $u$ is a harmonic function on $\Sigma_{2\,r}$, then
\begin{align}
\osc_r^xu\leq \gamma\,\osc^x_{2\,r}u\,  .
\end{align}
\end{Lem}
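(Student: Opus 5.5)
The plan is to deduce Lemma \ref{l:oscv2} from Lemma \ref{l:osc}, applied with base point $x$ in place of $x_0$ (so $\Sigma_{x,r}$ plays the role of $\Sigma_r$). Lemma \ref{l:osc} used nothing about $x_0$ beyond the area bound $|\Sigma_{x,2r}|\leq C_a(2r)^2$. It also used the topological fact that level curves cannot close up, which holds since $\Sigma$ is a disk. We take $\gamma=1-\e^{-24\,C_a}$ here as well, and we may assume $u$ is nonconstant.

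Write $S=\sup_{\Sigma_{x,2r}}u$, $I=\inf_{\Sigma_{x,2r}}u$, so $\osc^x_{2r}u=S-I$. Normalize by replacing $u$ with $(u-I)/(S-I)$, so that $0<u<1$ on $\Sigma_{x,2r}$ by the strong maximum principle. The idea is to run the argument of Lemma \ref{l:osc} twice, once for $u$ and once for $1-u$, both positive harmonic functions on $\Sigma_{x,2r}$. The proof of Lemma \ref{l:osc} actually shows more than stated. For any positive harmonic $w$ on $\Sigma_{x,2r}$, $v=-\log(w/w(x))$ satisfies $\sup_{\Sigma_{x,r}} v\leq 24\,C_a$. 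Indeed the energy bound \eqr{e:Liouv1} holds for any positive $w$, and the coarea/level-set argument only used $v(x)=0$, connectedness of $\Sigma_{x,r}$, and the disk topology. This gives a Harnack-type bound $\inf_{\Sigma_{x,r}} w\geq \e^{-24C_a}\,w(x)$.

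Apply this to $w=u$ and to $w=1-u$. We get $\inf_{\Sigma_{x,r}}u\geq \e^{-24C_a}u(x)$ and $\sup_{\Sigma_{x,r}}u\leq 1-\e^{-24C_a}(1-u(x))$. Subtracting,
\begin{align}
\osc^x_r u\leq 1-\e^{-24C_a}\,\bigl(u(x)+1-u(x)\bigr)=1-\e^{-24C_a}=\gamma\,\osc^x_{2r}u\, . \notag
\end{align}
Rescaling back gives the lemma. The key point is that the two one-sided bounds add up to exactly $\e^{-24C_a}$ regardless of the value of $u(x)$.

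The main obstacle I expect is checking that the proof of Lemma \ref{l:osc} really gives the Harnack bound for a general positive $w$ normalized at the center. The original proof normalized $\inf=0$ and $u(x_0)=1$, but only positivity and the value at the center were used, so this should be routine. A second issue is the hypothesis ``$u$ harmonic on $\Sigma_{2r}$'' in the statement. This should be read as harmonic on $\Sigma_{x,2r}$, with the area bound available there. In the setting of Theorem \ref{t:Liouville1.0} that bound comes from monotonicity and $|\Sigma|\leq 4C_a r^2$, possibly after adjusting constants. The cutoff $\phi$ must be supported in $B_{2r}(x)$, and one must check that integration by parts on the component $\Sigma_{x,2r}$ is legitimate. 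It is, since $\phi$ vanishes near $\partial\Sigma_{x,2r}\subset\partial B_{2r}(x)$.
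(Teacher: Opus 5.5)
Your argument is correct and is essentially the paper's proof, which applies Lemma \ref{l:osc} (recentered at $x$) to $u$ and to $-u$ and adds the two inequalities; your bounds for $w=u$ and $w=1-u$ are exactly these two applications. Your Harnack bound $\inf_{\Sigma_{x,r}} w\geq \e^{-24C_a}\,w(x)$ already follows from the \emph{statement} of Lemma \ref{l:osc}, because $w>0$ gives $w(x)-\inf_{\Sigma_{x,2r}}w\leq w(x)$, so you do not need to reopen its proof.
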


\begin{proof}
This follows by applying Lemma \ref{l:osc} to both $u$ and $-u$.
\end{proof}

Using this, we can prove the H\"older regularity of harmonic functions.

\begin{proof}[Proof of Theorem \ref{t:Liouville1.0}]
Suppose that $x$, $y\in \Sigma_s$. Applying Lemma \ref{l:oscv2} iteratively (similarly to how Lemma \ref{l:osc} was applied iteratively in the proof of Theorem \ref{t:Liouville0}), we get 
\begin{align}
|u(x)-u(y)|\leq \left(\frac{s}{r}\right)^{-\frac{\log \gamma}{\log 2}}\,\osc^x_{r}u\,  .
\end{align}
Since $|u|=\max\{u,-u\}$ is sub-harmonic, it follows from the mean-value inequality, see, e.g., Corollary $1.17$ in \cite{CM1}, that 
\begin{align}
\osc^x_{r}u\leq 2\,\sup_{\Sigma_{r}}|u|\leq C_n\,r^{-2}\int_{\Sigma_{2\,r}}|u|\,  .
\end{align}
Combining these two inequalities gives the claim.  
\end{proof}


\begin{thebibliography}{A}  
   
 \bibitem[BB]{BB}
 J. Bernstein and C. Breiner,  {\it Conformal structure of minimal surfaces with finite topology},
  Comment. Math. Helv. 86 (2011), no. 2, 353--381.
  
 \bibitem[CgYa]{CgYa} 
 S.Y. Cheng and S.T. Yau, \emph{Differential equations on Riemannian manifolds and their geometric applications}. Comm. Pure Appl. Math. 28 (1975), no. 3, 333-354.

 \bibitem[CM1]{CM1} T.H. Colding and W.P. Minicozzi II, 
\emph{A course in minimal surfaces}, Graduate Studies in Mathematics, vol. 121 (American Mathematical Society, Providence, RI, 2011). 

\bibitem[CM2]{CM2} T.H. Colding and W.P. Minicozzi II, 
{\it The space of embedded minimal surfaces of fixed genus in a 3-manifold. I. Estimates off the axis for disks},
 Ann. of Math. (2) 160 (2004), no. 1, 27--68.

\bibitem[CM3]{CM3} T.H. Colding and W.P. Minicozzi II, 
{\it The space of embedded minimal surfaces of fixed genus in a 3-manifold. II. Multi-valued graphs in disks},
 Ann. of Math. (2) 160 (2004), no. 1, 69--92.

\bibitem[CM4]{CM4} T.H. Colding and W.P. Minicozzi II, 
{\it The space of embedded minimal surfaces of fixed genus in a 3-manifold. III. Planar domains},
 Ann. of Math. (2) 160 (2004), no. 2, 523--572.

\bibitem[CM5]{CM5} T.H. Colding and W.P. Minicozzi II, 
{\it The space of embedded minimal surfaces of fixed genus in a 3-manifold. IV. Locally simply connected},
Ann. of Math. (2) 160 (2004), no. 2, 573--615.

\bibitem[CM6]{CM6} T.H. Colding and W.P. Minicozzi II, 
{\it The space of embedded minimal surfaces of fixed genus in a 3-manifold V; fixed genus},
 Ann. of Math. (2) 181 (2015), no. 1, 1--153.

\bibitem[CM7]{CM7} 
T.H. Colding and W.P.  Minicozzi II,  
{\it The Calabi-Yau conjectures for embedded surfaces},
 Ann. of Math. (2) 167 (2008), no. 1, 211--243. 
 
\bibitem[CM8]{CM8} 
T.H. Colding and W.P.  Minicozzi II,  
\emph{Liouville properties}. ICCM Not. 7 (2019), no. 1, 16-26.

\bibitem[CM9]{CM9} 
T.H. Colding and W.P.  Minicozzi II,  
\emph{Minimal submanifolds confined in space}, preprint.

\bibitem[CM10]{CM10} 
T.H. Colding and W.P.  Minicozzi II,  
\emph{Distance between minimal surfaces and  flows}, preprint.

 \bibitem[Fe]{Fe}
 H. Federer,  {\it Geometric measure theory}. Die Grundlehren der mathematischen Wissenschaften, 
 Band 153 Springer-Verlag New York, Inc., New York 1969.
 
 \bibitem[HM]{HM}  D. Hoffman and W.H. Meeks III, 
\emph{The strong halfspace theorem for minimal surfaces}.  
Invent. Math. 101 (1990), no. 2, 373--377.

\bibitem[J]{J}
P.W. Jones,  
{\it A complete bounded complex submanifold of $\CC^3$}, 
Proc. Amer. Math. Soc. 76 (1979), no. 2, 305--306.

 \bibitem[JX]{JX}
L. Jorge and F. Xavier,  
{\it A complete minimal surface in $\RR^3$ between two parallel planes},
Ann. of Math. (2) 112 (1980), no. 1, 203--206. 

\bibitem[K]{K}
H. Karcher, {\it Construction of minimal surfaces}. Surveys in Geometry, 1--96, 1989.
University of Tokyo, 1989, and Lecture Notes No. 12, SFB 256, Bonn, 1989, 
http://www.math.uni-bonn.de/people/karcher/karcherTokyo.pdf.


\bibitem[MM]{MM}
F. Martin and W.H.  Meeks III,  {\it Calabi-Yau domains in three manifolds},
 Amer. J. Math. 134 (2012), no. 5, 1329--1344.


\bibitem[MP]{MP}
W.H.  Meeks III  and J. P\'erez, 
{\it The classical theory of minimal surfaces},
 Bull. Amer. Math. Soc. (N.S.) 48 (2011), no. 3, 325--407.
 
 \bibitem[MPR]{MPR}
W.H.  Meeks III,   J. P\'erez and A. Ros, 
{\it The embedded Calabi-Yau conjecture for finite genus},
 Duke Math. J. 170 (2021), no. 13, 2891--2956.
 
\bibitem[MT]{MT}
W.H.  Meeks III  and G. Tinaglia,  
{\it Geometry of constant mean curvature surfaces in $\RR^3$},
J. Eur. Math. Soc. (JEMS) 27 (2025), no. 6, 2467--2476. 
 
 \bibitem[N]{N}
 N. Nadirashvili, 
{\it Hadamard's and Calabi-Yau's conjectures on negatively curved and minimal surfaces},
Invent. Math. 126 (1996), no. 3, 457--465. 


\bibitem[Os]{Os}
R. Osserman, {\it A survey of minimal surfaces},   Dover, 2nd.
edition (1986).


\bibitem[P]{P}
J. P\'erez,  {\it A new golden age of minimal surfaces},
 Notices Amer. Math. Soc. 64 (2017), no. 4, 347--358. 


\bibitem[SS]{SS}
R. Schoen and L. Simon,  {\it Regularity of simply connected surfaces with quasiconformal Gauss map. Seminar on minimal submanifolds}, 127--145,
Ann. of Math. Stud., 103, Princeton Univ. Press, Princeton, NJ, 1983.


\bibitem[Ya]{Ya}
S.T. Yau, {\it Harmonic functions on complete Riemannian
manifolds}, Comm. Pure Appl. Math. 28 (1975) 201--228.


\end{thebibliography}
\end{document}